\newtheorem{Theo}{Theorem}
\newtheorem{Lem}{Lemma}
\begin{document}
\title{Oscillations of the error term in the prime number theorem}
\author[J.-C. Schlage-Puchta]{Jan-Christoph Schlage-Puchta}
\begin{abstract}
Let $\sigma+i\gamma$ be a zero of the Riemann zeta function to the right of the line $\frac{1}{2}+it$. We show that this zero causes large oscillations of the error term of the prime number theorem. Our result is close to optimal both in terms of the magnitude and in the localization of large values for the error term.
\end{abstract}
\maketitle
\section{Introduction and Results}
Let $\Delta(x)=\Psi(x)-x$ be the error term in the prime number theorem. It is well known that the magnitude of $\Delta$ is related to the zeroes of the Riemann zetafunction, however, at first this relation appeared to be somewhat mysterious. Littlewood\cite{Littlewood} asked for a lower bound for $|\Delta(x)|$ depending only on a single zero of $\zeta$. In 1950, Tur\'an\cite{Turan oscillation} used his power sum method to prove the following.
\begin{Theo}
\label{thm:Turan}
If $\rho_0=\sigma_0+i\gamma_0$ is a zero of $\zeta$ with $\sigma_0\geq\frac{1}{2}$, and 
\[
X\geq\max(C_1, \exp(|\rho_0|^{60}),
\]
then
\[
\max_{x\leq X}|\Delta(x)|>\frac{X^{\sigma_0}}{|\rho_0|^{10\log X/\log_2 X}}\exp\left(-C_2\frac{\log X\log_3 X}{\log_2 X}\right),
\]
where $C_1, C_2$ are computable constants.
\end{Theo}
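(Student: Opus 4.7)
The plan is to combine the truncated Riemann--von Mangoldt explicit formula with Tur\'an's second main theorem (the power sum method). For a parameter $T$ to be chosen, the explicit formula gives
\[
\Delta(x)=-\sum_{|\gamma|\le T}\frac{x^\rho}{\rho}+O\!\Bigl(\frac{x\log^2(xT)}{T}+\log x\Bigr).
\]
Choosing $T$ as a suitable power of $|\rho_0|\log X$ makes the remainder much smaller than the target bound, so it suffices to find $x\in[1,X]$ at which the finite sum $S(x)=\sum_{|\gamma|\le T}x^\rho/\rho$ attains absolute value at least the asserted quantity.

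Substituting $x=e^u$ turns $S(e^u)$ into a generalized Dirichlet polynomial in $u$. I would sample it on an arithmetic progression $u_m=U+mh$ for $m=1,\ldots,N$, with step $h$ and starting point $U$ to be optimized. Then
\[
S(e^{u_m})=\sum_{\rho}\frac{e^{\rho U}}{\rho}\,(e^{\rho h})^m
\]
has the form of a power sum $\sum_j b_j z_j^m$ with $b_j=e^{\rho_j U}/\rho_j$ and $z_j=e^{\rho_j h}$. Listing the zeros so that $\rho_0$ comes first and applying Tur\'an's second main theorem then produces some $m\le N$ with
\[
|S(e^{u_m})|\ge\Bigl(\tfrac{N}{c(N+T')}\Bigr)^{\!N}\frac{e^{\sigma_0 u_m}}{|\rho_0|},
\]
where $T'$ reflects the effective logarithmic range of $|z_j|$ and $c$ is an absolute constant, provided the partial-sum hypothesis $\min_k|b_1+\cdots+b_k|\gg 1/|\rho_0|$ is in force and $\rho_0$ attains the largest real part among the selected zeros.

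Choosing $N\asymp\log X/\log_2 X$ balances the leading term $e^{\sigma_0 u_m}\asymp X^{\sigma_0}$ against the Tur\'an penalty $(N/(N+T'))^N=|\rho_0|^{-O(\log X/\log_2 X)}$, which reproduces the factor $|\rho_0|^{-10\log X/\log_2 X}$ appearing in the theorem. The residual $\exp(-C_2\log X\log_3 X/\log_2 X)$ comes from the combined cost of the zero-density input $N(t+1)-N(t)\ll\log(|t|+2)$ and the final optimization of $h$ and $U$. The principal obstacle is enforcing the Tur\'an partial-sum hypothesis when $\rho_0$ is not the zero of largest real part in $[1,T]$: to handle this I would restrict the power sum to zeros in a small rectangle around $\rho_0$ of width $\sim 1/\log|\rho_0|$, and control the zeros outside this rectangle via an $L^2$ estimate that exploits classical zero-density bounds for $\zeta$. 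This clean separation of the contribution of $\rho_0$ from the oscillatory tail is the crux of Tur\'an's original argument and the place where any sharpening of the constants $10$ and $C_2$ must be won.
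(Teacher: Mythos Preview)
The paper does not contain a proof of Theorem~1: it is quoted as Tur\'an's 1950 result purely for historical context, alongside Pintz's Theorem~2, and the only argument in the paper is the proof of the new Theorem~3. So there is no ``paper's own proof'' of this statement to compare your sketch against.

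That said, your outline has the right skeleton---truncated explicit formula plus Tur\'an's second main theorem, with $N\asymp\log X/\log_2 X$---and this is indeed how results of this type are obtained. The genuine gap is your handling of the case where $\rho_0$ does not have the largest real part among the zeros retained. Your proposed fix, restricting the power sum to a rectangle of width $\sim 1/\log|\rho_0|$ around $\rho_0$ and controlling the remaining zeros by an $L^2$ or zero-density argument, does not work as stated: zeros outside that rectangle can have real part as large as $\sigma_0$ (or larger), so their contribution to $S(e^{u_m})$ is pointwise of the same order $e^{\sigma_0 u_m}$ as the term you are trying to isolate, and no mean-value bound makes it negligible relative to the main term. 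The actual mechanisms used in this circle of ideas are different: either one replaces $\rho_0$ at the outset by another zero that is both at least as far to the right \emph{and} suitably isolated (this is what the present paper does, via Lemma~1 and Lemma~3), or one inserts a smoothing weight such as $e^{ks^2}$ into the explicit formula so that zeros with $|\gamma-\gamma_0|$ large are damped before the power-sum step (this is Pintz's device, also adopted here). Your sketch invokes neither of these, and without one of them the separation you describe cannot be carried out.
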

Pintz\cite{Pintz} improved both the localization and the lower bound for $\Delta$. He proved the following.
\begin{Theo}
\label{thm:Pintz}
If $\rho_0=\sigma_0+i\gamma_0$ is a zero of $\zeta$ with $\sigma_0\geq\frac{1}{2}$, and 
\[
X\geq\max(C_1, \gamma_0^{400}),
\]
then there exists $x\in[X^{1/4}, X]$ for which
\[
|\Delta(x)|>c_2\frac{x^{\sigma_0}}{\gamma_0^{50}},
\]
where $C_1, c_2$ are computable constants.
\end{Theo}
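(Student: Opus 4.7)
The plan is to argue by contradiction: assume that $|\Delta(x)|\leq K:=c_2 X^{\sigma_0}/\gamma_0^{50}$ for all $x\in[X^{1/4},X]$, where $c_2$ is a small absolute constant to be chosen, and then derive from this assumption a lower bound on a suitable weighted exponential sum that is incompatible with it.

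First I would invoke the Riemann--von Mangoldt explicit formula with truncation at some height $T=\gamma_0^B$ for a modest absolute constant $B$, writing
\[
\Delta(x)=-\sum_{|\gamma|\leq T}\frac{x^\rho}{\rho}+R(x,T),\qquad x\in[X^{1/4},X],
\]
with $R(x,T)=O(x\log^2(xT)/T)$ harmless on our range. Substituting $x=e^y$, the statement reduces to an oscillation estimate for the exponential sum $S(y):=-\sum_{|\gamma|\leq T}e^{y\rho}/\rho$ on the interval $J:=[(\log X)/4,\log X]$.

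Next I would introduce a mollifier $K$ whose Laplace transform is concentrated near imaginary frequency $\gamma_0$, passing from $S$ to a reduced sum $\tilde S(y)$ essentially supported on zeros with $|\gamma-\gamma_0|\leq\delta$ for some small $\delta$. For this reduced sum Tur\'an's second main theorem, applied with base points $e^\rho$ and coefficients proportional to $1/\rho$ weighted by $\widehat K$, yields $\max_{y\in J}|\tilde S(y)|\geq e^{y_0\sigma_0}/\gamma_0^{c}$ for some $y_0\in J$ and some exponent $c<50$. The hypothesis $X\geq\gamma_0^{400}$ is exactly what makes $J$ long enough---measured against $\log\gamma_0$ and the number of zeros in the spectral window---for Tur\'an's theorem to produce such a $c$. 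Comparing this lower bound to the trivial bound $|\tilde S(y)|\leq K\cdot\|K\|_{L^1}+\text{(mollifier error)}$ and choosing $c_2$ small delivers the contradiction.

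The main obstacle is the spectral filtering step: there are typically of order $\log\gamma_0$ zeros with $|\gamma-\gamma_0|\leq 1$, so $\tilde S$ is a power sum in several terms that could \emph{a priori} cancel destructively at every $y\in J$. Tur\'an's method is precisely the tool that rules this out, but only if $K$ is chosen carefully so that its $L^1$ norm is small, its effective spectral width is tight, and the collection of ``surviving'' zeros is not too numerous. Balancing these three constraints against the length of $J$ determines the numerical exponents $50$ and $400$ appearing in the statement, and sharpening them beyond Pintz's values would require either an essentially new idea in the power-sum step or substantially better zero-density information.
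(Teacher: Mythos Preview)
The paper does not itself prove this theorem; it is quoted as Pintz's result with a citation. However, the paper's proof of Theorem~\ref{thm:main} explicitly follows Pintz's argument and cites \cite{Pintz} for the key identities, so the shape of Pintz's proof is visible in Section~2 and one can compare against that.

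Your outline is correct in spirit: a spectral filter isolates the zeros near $\rho_0$, and Tur\'an's second main theorem forces a large value of the filtered sum somewhere on the relevant interval. The implementation in Pintz differs in one point that matters. He does not truncate the explicit formula and then mollify; he works directly with the contour integral
\[
U=\frac{1}{2\pi i}\int_{(2)} H(s+i\gamma_0)\,e^{ks^2+\mu s}\,ds,\qquad H(s)=\frac{\zeta'}{\zeta}(s)-\frac{1}{s-1},
\]
so that the Gaussian factor $e^{ks^2}$ is simultaneously the mollifier and the ``truncation''. On the real side $U$ becomes a Gaussian-weighted integral of $\Delta(x)/x$; on the zero side it becomes $\sum_\rho e^{k(\rho-i\gamma_0)^2+\mu(\rho-i\gamma_0)}+O(1)$, and the power-sum theorem is applied in the continuous parameter $\mu$, which ranges over an interval of length comparable to $\log X$.

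Your truncation at $T=\gamma_0^B$ with an absolute $B$ is not harmless as claimed, and this is a genuine gap. The remainder $R(x,T)=O\!\big(x\log^2(xT)/T\big)$ must beat $x^{\sigma_0}/\gamma_0^{50}$; when $\sigma_0$ is near $\tfrac12$ this forces $T\gg x^{1/2}\gamma_0^{50}$, and since $x$ can be as large as $X$ while the hypothesis imposes only the \emph{lower} bound $X\geq\gamma_0^{400}$, no fixed power of $\gamma_0$ suffices. You can repair this by letting $T$ grow with $X$, but then the mollifier must damp a much longer tail of zeros, and your ``modest absolute constant $B$'' description is no longer accurate. Pintz's Gaussian weight handles both issues at once through the superexponential decay of $e^{k(\rho-i\gamma_0)^2}$ in $|\gamma-\gamma_0|$; this is precisely why that specific kernel is chosen, and why a single smooth weight is cleaner here than your two-stage ``truncate, then mollify'' scheme.
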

Here we prove the following.
\begin{Theo}
\label{thm:main}
If $\epsilon\in[0, 1]$ is a real number, $\rho_0=\sigma_0+i\gamma_0$ is a zero of $\zeta$ with $\sigma_0\geq\frac{1}{2}+\epsilon$ and $\gamma_0>5.5^{\frac{1}{\epsilon}}$, then for every 
\[
X\geq\max(C_1, \gamma_0^{12000\epsilon^{-3}}),
\]
there exists $x\in[X, X^{1+\epsilon}]$ such that 
\[
|\Delta(x)|>c_2\frac{x^{\sigma_0}}{\gamma_0^{1+\epsilon}},
\]
where $C_1, c_2$ are computable constants.
\end{Theo}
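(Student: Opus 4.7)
The plan is to argue by contradiction. Suppose throughout $I=[X,X^{1+\epsilon}]$ that $|\Delta(x)|\le c_2 x^{\sigma_0}/\gamma_0^{1+\epsilon}$, and derive a contradiction by evaluating a weighted Mellin-type integral in two different ways. Set
\[
J \;=\; \int_I \Delta(x)\,\phi(x)\,x^{-\rho_0-1}\,dx,
\]
where $\phi$ is a smooth non-negative bump supported in $I$ and normalized so that $\int_I \phi(x)\,x^{-1}\,dx\asymp \epsilon\log X$. Under the hypothesis, the trivial estimate gives at once $|J|\ll c_2\,\epsilon\log X/\gamma_0^{1+\epsilon}$.

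For the matching lower bound I would substitute the truncated explicit formula
\[
\Delta(x) \;=\; -\sum_{|\gamma|\le T}\frac{x^{\rho}}{\rho}\;+\;O\!\left(\frac{x\log^2(xT)}{T}\right),
\]
with $T$ a suitable power of $X$ (say $T=X^{1/2+\epsilon}$), chosen so that the truncation error contributes negligibly compared to the target $\epsilon\log X/\gamma_0$ after integration against $\phi(x)x^{-\sigma_0-1}$. This converts $J$ into
\[
J \;=\; -\sum_{|\gamma|\le T}\frac{\widehat{\phi}(\rho-\rho_0)}{\rho}\;+\;(\text{small}),
\]
with $\widehat{\phi}(z):=\int_I\phi(x)x^{z-1}\,dx$. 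The $\rho=\rho_0$ term contributes exactly $-\widehat{\phi}(0)/\rho_0$, of magnitude $\asymp \epsilon\log X/\gamma_0$, which is the desired main term.

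The crux is controlling the sum over the remaining zeros. Zeros on the critical line contribute at most $X^{1/2-\sigma_0}\le X^{-\epsilon}$ per term, with further $1/|\rho-\rho_0|$ suppression coming from $\widehat{\phi}$; summing against the classical density $N(t+1)-N(t)\ll\log t$ up to height $T$ gives a total of order at most $X^{-\epsilon}\log^2 T$, harmless once $X$ exceeds a polynomial in $\gamma_0$. Potential zeros with $\beta>\tfrac12$ are more delicate: one invokes a zero-density estimate of the form $N(\sigma,T)\ll T^{B(1-\sigma)}\log^C T$ and chooses $\phi$ smooth enough that $\widehat{\phi}(z)$ has rapid polynomial decay in $|\mathrm{Im}(z)|$ (a Fej\'er-type kernel is natural). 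The very strong hypothesis $X\ge\gamma_0^{12000/\epsilon^3}$ is what forces the product of the weight $X^{\beta-\sigma_0}$ and the density sum to stay below the main term.

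The step I expect to be the main technical burden is taming the zeros lying very close to $\rho_0$ with real part only slightly below $\sigma_0$: such zeros evade both the $\widehat{\phi}$-decay (being close in ordinate to $\gamma_0$) and the $X^{\beta-\sigma_0}$-suppression (being close in abscissa to $\sigma_0$), and it is their handling that should force the polynomial dependence $\epsilon^{-3}$ in the exponent of the required lower bound on $X$. Once all secondary contributions are bounded, comparing the upper and lower estimates on $J$ yields $\epsilon\log X/\gamma_0\ll c_2\,\epsilon\log X/\gamma_0^{1+\epsilon}$, i.e.\ $\gamma_0^{\epsilon}\ll c_2$, which for a suitably small absolute $c_2$ contradicts the hypothesis $\gamma_0>5.5^{1/\epsilon}$ and finishes the proof.
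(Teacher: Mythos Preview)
Your outline has a genuine gap, precisely at the step you flag as ``the main technical burden'': zeros $\rho=\beta+i\gamma$ with $|\rho-\rho_0|$ small (say $|\gamma-\gamma_0|\le\epsilon/16$ and $\sigma_0-\epsilon/16\le\beta\le\sigma_0$). For any fixed test function $\phi$, such a zero gives $\widehat\phi(\rho-\rho_0)\approx\widehat\phi(0)$ and $|\rho|\approx|\rho_0|$, so its contribution to your sum has exactly the same size as the main term from $\rho_0$. Neither rapid decay of $\widehat\phi$ nor the factor $X^{\beta-\sigma_0}$ helps, and enlarging $X$ changes nothing here; so the suggestion that this zone is what drives the exponent $12000\epsilon^{-3}$ is mistaken (that exponent comes from zeros at moderate distance $\asymp\epsilon$ from $\gamma_0$). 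A priori there can be $\asymp\log\gamma_0$ zeros in this box, and with a single $\phi$ there is no mechanism preventing them from cancelling the $\rho_0$-term entirely. Zero-density estimates do not rescue you either: they bound $N(\sigma,T)$ globally, not the count in a box of side $\epsilon$ centred at $\rho_0$.

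The paper overcomes this with two ingredients your proposal lacks. First, it replaces $\rho_0$ by an \emph{exposed} zero (one with no zeros of larger real part within $\log\gamma_0$ vertically), and then invokes a Tur\'an density lemma to show that the small box near the exposed zero contains at most $\tfrac{\epsilon}{11}\log\gamma_0$ zeros. Second, and crucially, it does not use a fixed weight: it works with a one-parameter family (a Gaussian $e^{ks^2+\mu s}$ with $\mu$ ranging over an interval of length $\asymp\epsilon\log X$) and applies Tur\'an's second main theorem on power sums. That theorem guarantees the existence of \emph{some} $\mu$ in the range for which the sum over the nearby zeros is bounded below by roughly $(4e/\epsilon)^{-|Z_4|}\ge\gamma_0^{-0.35\epsilon}$ times the expected size; comparing with the upper bound $\gamma_0^{-\epsilon}$ then forces $\gamma_0<5.5^{1/\epsilon}$. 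Without a power-sum step (or an equivalent device that varies the test function and exploits the small cardinality of the near-zero set), the argument cannot be closed.
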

The constants 12000 and $5.5$ are not optimal, moreover, one can improve one of these constants at the expense of the other. However, the qualtitative dependence on $\epsilon$ cannot easily be improved.

Our proof follows Pintz' argument, the improvement comes from the fact that we replace the zero in question by another one, which gives at least the same lower bound, but is somewhat isolated. In this way we reduce the number of relevant zeroes, which reduces the loss when applying a power sum theorem. A similar approach has repeatedly been used by Tur\'an in connection with density estimates, see e.g. \cite{Turan density}.

The proofs of all three theorems mentioned here uses the power sum method, for a background we refer the reader to Tur\'an's book \cite{Turan book}.

\section{Proof}

The following was proven by Tur\'an \cite{Turan density}. As he remarked himself, the same result with $1.26$ in place of $0.71$ is a simple application of the three circle theorem. If one does not care about the constant $5.5$ in the theorem, any constant in place of $0.71$ would work.

\begin{Lem}
\label{Lem:Turan}
Let $\delta>0$, $\sigma>\frac{1}{2}+2\delta$, $T>T_0(\delta)$. Suppose that $\zeta$ has no roots in the region $\Re\;s\geq\sigma$, $|\Im\;s-T|<\log T$. Then $\zeta$ has $\leq 0.71\delta\log T$ zeroes in the square $\sigma-\delta\leq\Re\;s\leq\sigma$, $|\Im\;s-T|<\frac{\delta}{2}$.
\end{Lem}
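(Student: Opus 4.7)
The plan is to count the zeros via Jensen's formula applied to $\zeta$ on a disk centered at the accessible point $s_0=\sigma+iT$. By hypothesis $\zeta(s_0)\ne 0$, so Jensen yields, for any disk $D(s_0,R)$ in the domain of analyticity of $\zeta$,
\[
\sum_{|\rho-s_0|<R}\log\frac{R}{|\rho-s_0|}
=\frac{1}{2\pi}\int_0^{2\pi}\log|\zeta(s_0+Re^{i\theta})|\,d\theta-\log|\zeta(s_0)|.
\]
Each zero of $\zeta$ in the target square lies inside $D(s_0,r_1)$ with $r_1=\tfrac{\sqrt{5}}{2}\delta$ (the distance from $s_0$ to the farthest corner), and therefore contributes at least $\log(R/r_1)$ to the left-hand sum. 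Thus if $N$ denotes the number of zeros in the square, the quantity $N\log(R/r_1)$ is dominated by the right-hand side.

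Bounding the two terms on the right is where the hypotheses enter. For the boundary integral the convexity bound $|\zeta(\sigma'+it')|\ll|t'|^{(1-\sigma')/2+\varepsilon}$, together with the functional equation should the circle dip into $\Re s<0$, produces an upper bound of the form $C_1\log T$ whose constant depends on $R$. For the lower bound on $|\zeta(s_0)|$ the nonvanishing hypothesis is essential: $\log\zeta$ is a single-valued analytic function on the half-strip $\{\Re s\ge\sigma,\;|\Im s-T|<\log T\}$, tending to $0$ as $\Re s\to\infty$ and satisfying $|\log\zeta(2+iT)|=O(1)$. Borel--Carath\'eodory (equivalently Hadamard's three-circle theorem), applied on a disk inside the half-strip anchored at $2+iT$, transports this bound to $s_0$ with a loss of at most $C_2\log T$, giving $\log|\zeta(s_0)|\ge -C_2\log T$.

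Combining, $N\le(C_1+C_2)\log T/\log(R/r_1)$. Taking $R$ of a suitable moderate size and optimizing yields $N\le(\mathrm{const})\cdot\delta\log T$, the hypothesis $T>T_0(\delta)$ being precisely what makes the required choice of $R$ admissible (it must be large enough for $\log(R/r_1)$ to be of order $1/\delta$ while keeping the boundary bound of size $\log T$). The main obstacle is squeezing the constant down to the stated $0.71$: a direct three-circle argument yields the larger constant $1.26$ remarked on by the author, and the improvement seems to require a careful joint optimization of the Borel--Carath\'eodory step against the boundary integral, which is presumably carried out in \cite{Turan density}. Since any absolute constant would suffice for Theorem~\ref{thm:main} (at the cost of worsening the constant $5.5$), I would not push further for the sharp value.
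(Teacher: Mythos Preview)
The paper does not prove this lemma at all: it simply cites Tur\'an \cite{Turan density}, remarks that a three-circle argument yields the weaker constant $1.26$, and notes that any absolute constant would suffice for Theorem~\ref{thm:main} at the price of worsening the $5.5$. On those points your proposal is entirely in line with the paper --- you make exactly the same observations and likewise defer to \cite{Turan density} for the sharp constant.

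Where your proposal goes beyond the paper is in the Jensen/Borel--Carath\'eodory sketch, and there it contains a genuine gap. The optimization you describe --- taking $R$ large enough that $\log(R/r_1)$ is of order $1/\delta$ while keeping the boundary integral of size $\log T$ --- is not achievable. Since $r_1\asymp\delta$, forcing $\log(R/r_1)\asymp 1/\delta$ requires $R\asymp\delta e^{1/\delta}$; but for such $R$ the functional equation gives $\log|\zeta(s_0+Re^{i\theta})|\sim(\tfrac12-\Re s)\log T$ on the left half of the circle, and the Jensen average is then $\asymp\frac{R}{\pi}\log T$, not $O(\log T)$. If instead you keep $R$ bounded, then $\log(R/r_1)\asymp\log(1/\delta)$ only, while the Borel--Carath\'eodory step (whose ratio $2r/(R'-r)$ blows up as the inner radius approaches $R'=2-\sigma$) contributes $-\log|\zeta(s_0')|\ll\delta^{-1}\log T$. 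The combination yields at best $N\ll\dfrac{\log T}{\delta\,\log(1/\delta)}$, missing the target $N\ll\delta\log T$ by a factor of roughly $\delta^2\log(1/\delta)$. A single Jensen disk coupled with a single Borel--Carath\'eodory transport simply does not produce the factor $\delta$; the three-circle argument (and Tur\'an's sharpening of it) extracts this factor by a more delicate use of the zero-free half-strip, which is exactly the content you are right to attribute to \cite{Turan density}.
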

If a zero of $\zeta$ satisfies the conditions of the lemma, we call it an exposed zero.

The following is a special case of the second main theorem of Tur\'an's theory of power sums as proven by Kolesnik and Straus\cite{KSTuran}.
\begin{Lem}
Let $z_1, \ldots, z_n$ be complex numbers. Then we have for every $m>0$ the bound
\[
\max_{m<\nu\leq m+n}\frac{1}{|z_1|^\nu}\left|\sum_{j=1}^nz_j^\nu\right| \geq\left(\frac{n}{4e(m+n)}\right)^n
\]
\end{Lem}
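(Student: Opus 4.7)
Since this lemma is a version of Tur\'an's second main theorem for power sums, the natural approach is via duality against an interpolation polynomial. First I would normalise: dividing all $z_j$ by $z_1$ and setting $w_j = z_j/z_1$, the inequality becomes $\max_{m < \nu \le m+n} |T_\nu| \ge (n/(4e(m+n)))^n$ for $T_\nu = \sum_j w_j^\nu$, with $w_1 = 1$. The hardest case is $|w_j| \le 1$ for all $j$ (i.e.\ $z_1$ of maximal modulus), and it suffices to treat this case.

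Next, I would construct the polynomial
\[
P(z) = z^{m+1} \prod_{j=2}^n \frac{z - w_j}{1 - w_j},
\]
of degree $m+n$, whose monomial expansion $P(z) = \sum_{k=m+1}^{m+n} a_k z^k$ is supported on $k \in (m, m+n]$. By construction $P(1) = 1$ and $P(w_j) = 0$ for $j \ge 2$, so summing over the roots gives $\sum_k a_k T_k = \sum_j P(w_j) = 1$. The triangle inequality then yields $\max_k |T_k| \ge 1/\sum_k |a_k|$, and the proof is reduced to the coefficient estimate $\sum_k |a_k| \le (4e(m+n)/n)^n$.

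The coefficient bound would come from Cauchy's formula on a circle $|z| = r$, giving $|a_k| \le r^{-k}\max_{|z|=r}|P(z)|$, with $r \approx 1 - n/(m+n)$ as the natural choice that balances the $r^{-k}$ blow-up against the growth of $P$. The main obstacle is that $P$ carries the denominator $\prod_j (1 - w_j)$, which can be arbitrarily small when the $w_j$ cluster near $1$; handling this worst case is precisely where the precise constant $4e$ is earned. The Kolesnik--Straus refinement of the constant is most likely obtained by replacing the naive Lagrange-type interpolant with a more delicate choice --- for instance a Chebyshev-weighted variant, or a symmetrised contour integral --- that sidesteps pathological clustering of the $w_j$. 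A conceptually cleaner alternative would be to bypass the explicit polynomial altogether by writing the generating function $G(z) = \sum_j (1 - w_j z)^{-1} = \sum_\nu T_\nu z^\nu$ and extracting the block of coefficients $\nu \in (m, m+n]$ via a Cauchy integral against an optimally chosen kernel, then matching the outcome to $\max_\nu |T_\nu|$ by H\"older's inequality.
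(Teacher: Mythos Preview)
The paper does not actually prove this lemma: it is quoted as a special case of the Kolesnik--Straus refinement of Tur\'an's second main theorem, with only a citation. So there is no in-paper argument to compare against; I can only assess your sketch on its own terms.

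Your duality framework is the right one and is exactly the classical route to the second main theorem: normalise, build a polynomial $P$ supported on degrees $m+1,\dots,m+n$ with $P(w_1)=1$ and $P(w_j)=0$ for $j\ge 2$, and deduce $\max_k|T_k|\ge 1/\sum_k|a_k|$. The reduction to $|w_j|\le 1$ is also correct (if $|z_1|$ is not of maximal modulus the left-hand side only increases). But the sketch has a genuine gap precisely at the step you yourself flag as ``the main obstacle'': your explicit choice $P(z)=z^{m+1}\prod_{j\ge 2}(z-w_j)/(1-w_j)$ is not even defined when some $w_j=1$, and when several $w_j$ cluster near $1$ the coefficient sum $\sum_k|a_k|$ blows up, so no Cauchy-on-a-circle estimate with a fixed radius $r\approx 1-n/(m+n)$ can recover the uniform bound $(4e(m+n)/n)^n$. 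You acknowledge this and offer ``a Chebyshev-weighted variant'' or ``a symmetrised contour integral'' as remedies, but these are gestures, not arguments; as written the plan does not prove anything close to the stated constant, and would need substantial new input to do so.

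For orientation, the actual Kolesnik--Straus argument does not try to write down one lucky interpolant and then repair it. It treats the dual problem as a genuine extremal problem over all polynomials of the required shape and solves it with an explicit Chebyshev-type construction; the clustering of the $w_j$ near $1$ is absorbed into that extremal analysis rather than confronted as a separate obstacle afterwards.
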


\begin{Lem}
\label{Lem:power sum}
Suppose that $\sigma_0+i\gamma_0$ is a zero of $\zeta$ with $\delta=\sigma_0-\frac{1}{2}>0$, and assume that $\gamma_0>\max(C, \log^{2/\delta}(1/\delta))$. Then there exists an exposed zero $\sigma_1+i\gamma_1$ of $\zeta$, such that $\sigma_1\geq\sigma_0$, $\gamma_1\leq2\gamma_0$.
\end{Lem}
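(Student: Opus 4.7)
The plan is to produce the required exposed zero by a climbing iteration that starts at $\rho_0$ and at each step moves to a zero of strictly larger real part lying in a $\log\gamma$-wide imaginary-part window; the iteration terminates at an exposed zero. The quantitative content consists in bounding the number of iterations $K$ and controlling the cumulative drift of the imaginary part within $\gamma_0$.

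Set $\rho^{(0)}=\rho_0$. Given $\rho^{(j)}=\sigma^{(j)}+i\gamma^{(j)}$, consider
\[
R_j=\{s:\Re s>\sigma^{(j)},\ |\Im s-\gamma^{(j)}|<\log\gamma^{(j)}\}.
\]
If $\zeta$ has no zero in $R_j$ then $\rho^{(j)}$ is exposed, via Lemma~\ref{Lem:Turan} applied at $\sigma=\sigma^{(j)}+\varepsilon$ with $\varepsilon\to 0^{+}$; set $\rho_1:=\rho^{(j)}$ and stop. Otherwise let $\rho^{(j+1)}$ be the zero of maximal real part in $R_j$. The sequence $\sigma^{(j)}$ is strictly increasing and bounded above by $1$, so the iteration terminates after some $K<\infty$ steps.

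Since $|\gamma^{(j+1)}-\gamma^{(j)}|<\log\gamma^{(j)}\le\log(2\gamma_0)$ (provided $\gamma^{(j)}\in[0,2\gamma_0]$ throughout), the total drift is at most $K\log(2\gamma_0)$; it therefore suffices to prove $K\log(2\gamma_0)<\gamma_0$, which then bootstraps the assumption on the $\gamma^{(j)}$ and gives $\gamma_1=\gamma^{(K)}\le 2\gamma_0$. To bound $K$, use the maximality of $\sigma^{(j+1)}$ in $R_j$: the strip $\{\Re s>\sigma^{(j+1)},\ |\Im s-\gamma^{(j)}|<\log\gamma^{(j)}\}$ is free of zeros of $\zeta$, so Lemma~\ref{Lem:Turan} at $T=\gamma^{(j)}$ and $\sigma=\sigma^{(j+1)}+\varepsilon$ (with $\varepsilon\to 0^{+}$) gives, for any $\delta'>0$, at most $0.71\delta'\log\gamma^{(j)}$ zeros in the $\delta'\times\delta'$-box immediately to the left of $\rho^{(j+1)}$ centred on $\gamma^{(j)}$. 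Classifying iterations by dyadic ranges of the jump $a_j=\sigma^{(j+1)}-\sigma^{(j)}$, the ``large'' jumps ($a_j\ge\delta$) number at most $1/(2\delta)$ by the $\sigma$-budget $\sum a_j\le 1-\sigma_0$, while the ``small'' jumps in each dyadic window are controlled by Lemma~\ref{Lem:Turan} applied with the matching $\delta'$. A careful telescoping of these estimates produces a bound on $K$ that, combined with the hypothesis $\gamma_0>\log^{2/\delta}(1/\delta)$, yields $K\log(2\gamma_0)<\gamma_0$.

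The main obstacle is this recursive bookkeeping. Lemma~\ref{Lem:Turan} only produces a local box count conditional on an exact gap to the right, and the forbidden strips centred at $\gamma^{(j)}$ and $\gamma^{(j+1)}$ need not coincide because $\gamma^{(j+1)}$ may differ from $\gamma^{(j)}$ by up to $\log\gamma^{(j)}$, so successive applications of Lemma~\ref{Lem:Turan} refer to overlapping but distinct boxes. Choosing the dyadic parameters so that the cumulative count telescopes into the precise exponent $2/\delta$ appearing in the hypothesis on $\gamma_0$ is the technical heart of the argument.
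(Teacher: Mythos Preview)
Your climbing iteration is exactly the mechanism the paper uses. The divergence is in how you bound the number $K$ of steps (equivalently, how you bound the cumulative drift of $\gamma^{(j)}$). You try to extract this bound from repeated applications of Lemma~\ref{Lem:Turan} together with a dyadic sorting of the jumps $a_j=\sigma^{(j+1)}-\sigma^{(j)}$, and you correctly flag this as ``the main obstacle''. In fact this route does not close: Lemma~\ref{Lem:Turan} only controls the number of zeros in a $\delta'\times\delta'$ box centred at the \emph{current} height $\gamma^{(j)}$, conditional on a zero-free strip to the right at that same height. But successive iterates $\rho^{(j)}$ need not lie in a common such box, since $\gamma^{(j)}$ may move by as much as $\log\gamma^{(j)}$ at every step; the boxes to which Lemma~\ref{Lem:Turan} applies are therefore essentially disjoint and their individual counts do not telescope into a bound on $K$. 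Your budget argument already gives at most $1/(2\delta')$ jumps of size $\ge\delta'$, and Lemma~\ref{Lem:Turan} adds nothing useful on top of that for the small jumps. Summing over dyadic scales of $\delta'$ produces a divergent bound, not the required $K<\gamma_0/\log(2\gamma_0)$.

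The paper avoids all of this by invoking a \emph{global} zero-density estimate. Since the iterates are distinct zeros with $\Re\rho\ge\sigma_0$ and (as long as the process runs) $0<\Im\rho\le 2\gamma_0$, one has $K\le N(\sigma_0,2\gamma_0)$. Any density bound of the form $N(\sigma,T)\ll T^{\theta(\sigma)}\log^{O(1)}T$ with $\theta(\sigma)<1$ for every $\sigma>\tfrac12$ (e.g.\ Ingham's $\theta(\sigma)=3(1-\sigma)/(2-\sigma)$) gives $N(\sigma_0,2\gamma_0)=o(\gamma_0/\log\gamma_0)$, hence $K\log(2\gamma_0)<\gamma_0$ for $\gamma_0$ large in terms of $\delta$. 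The paper phrases the same step contrapositively: if no zero in the rectangle $\Re s\ge\sigma_0$, $0<\Im s\le 2\gamma_0$ is exposed, the chain is forced to exit through the top, yielding $N(\sigma_0,2\gamma_0)\ge \gamma_0/\log(2\gamma_0)$, which contradicts the density estimate. Either way, the missing ingredient in your argument is precisely this density input; Lemma~\ref{Lem:Turan} is used only to define and exploit ``exposed'' later in the paper, not to prove existence of an exposed zero here.
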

\begin{proof}
Suppose there is no exposed zero in the rectangle $\Re\;s\geq\sigma_0$, $\Im s\leq 2\gamma_0$. Then there exists a sequence of roots $\sigma_j+i\gamma_j$, $0\leq j\leq n$, such that $\sigma_{j+1}\geq\sigma_j$, $|\gamma_j-\gamma_{j+1}|\leq \log 2\gamma_0$, and $\gamma_n\geq 2\gamma_0$. In particular we have $N(\sigma_0, 2\gamma_0)\geq\frac{2\gamma_0}{\log(2\gamma_0)}$. On the other hand we have $N(\sigma, T)\leq T^{\left(\frac{12}{5}+\epsilon\right)(1-\sigma)}$, thus, if $\gamma_0$ is sufficiently large we obtain a contradiction.
\end{proof}
Clearly, by decreasing the constant $c_2$ by a factor 2 at most we find that it suffices to prove Theorem~\ref{thm:main} for exposed zeros. 

Now fix $X$ and $\sigma_0+i\gamma_0$ as in Theorem~\ref{thm:main}, and let $\mu$ be a real number satisfying
\begin{equation}
\label{eq:mu def}
\left(1+\frac{\epsilon}{3}\right)\log X\leq\mu\leq \left(1+\frac{2\epsilon}{3}\right)\log X,
\end{equation}
and put $k=\frac{1}{40}\epsilon^2\log X$. As in Pintz' proof of Theorem~\ref{thm:Pintz}, we put $H(s)=\frac{\zeta'}{\zeta}(s)-\frac{1}{s-1}$, and compute 
\[
U=\frac{1}{2\pi i}\int_{2-i\infty}^{2+i\infty} H(s+i\gamma_1)e^{ks^2+\mu} s\;ds
\]
in two different ways. On one hand we can express $H(s)$ via $\Delta$, and obtain
\begin{eqnarray*}
U & = & \frac{1}{2\sqrt{\pi k}}\int_1^\infty\frac{\Delta(x)}{x}x^{i\gamma_0}\exp\left(-\frac{(\mu-\log x)^2}{4k}\right)\left(-\gamma_0+\frac{\mu-\log x}{2k}\right)dx\\
 & = & \frac{1}{2\sqrt{\pi k}}\left(\int_1^X + \int_X^{X^{1+\epsilon}} + \int_{X^{1+\epsilon}}^\infty\right),
\end{eqnarray*}
see\cite[(4.5)]{Pintz}. We now estimate the contribution of the first and the third integral. We have
\begin{eqnarray*}
\left|\int_1^X\right| & \leq & \left(\gamma_0+\frac{\mu}{2k}\right)
\int_1^X\exp\left(-\frac{(\mu-\log x)^2}{4k}\right)\;dx\\
 & \leq & 2\gamma_0 X \exp\left(-\frac{\epsilon^2\log^2 X}{36 k}\right)\\
 & \leq & 1,
\end{eqnarray*}
provided that $X>\gamma_0^{10}$.
Similarly we get
\begin{eqnarray*}
\left|\int_{X^{1+\epsilon}}^\infty\right| & \leq &\left(\gamma_0+\frac{\mu}{2k}\right)
\int_{X^{1+\epsilon}}^\infty\exp\left(-\frac{(\mu-\log x)^2}{4k}\right)\;dx\\
 & \leq & 2\gamma_0 \int_{(1+\epsilon)\log X}^\infty \exp\left(t-\frac{10}{3\epsilon}(t-\mu)\right)\;dt\\
  & \leq & 1.
\end{eqnarray*}
Finally, if $|\Delta(x)|<\frac{x^{\sigma_0}}{\gamma_0^{1+\epsilon}}$ for all $x\in[X, X^{1+\epsilon}]$, then 
\begin{eqnarray*}
\left|\int_X^{X^{1+\epsilon}}\right| & \leq & \frac{\gamma_0+\frac{20}{\epsilon}}{2\sqrt{\pi k}\gamma_1^{1+\epsilon}}\int_X^{X^{1+\epsilon}} x^{\sigma_0-1}\exp\left(-\frac{(\mu-\log x)^2}{4k}\right)\;dx\\
 & \leq & \frac{1}{\sqrt{\pi k}\gamma_1^\epsilon}\int_X^{X^{1+\epsilon}} x^{\sigma_0-1}\exp\left(-\frac{(\mu-\log x)^2}{4k}\right)\;dx\\
  & \leq & \frac{1}{\sqrt{\pi k}\gamma_1^\epsilon}\int_{\log X}^{(1+\epsilon)\log X}\exp\left(\sigma_0t-\frac{(\mu-t)^2}{4k}\right)\;dt\\
   & \leq & \frac{1}{\sqrt{\pi k}\gamma_1^\epsilon}\int_{-\infty}^{\infty}\exp\left(\sigma_0(\mu+r)-\frac{r^2}{4k}\right)\;dr\\
   & = & \frac{e^{\sigma_0\mu+\sigma_0^2k}}{\gamma_0^\epsilon}.
\end{eqnarray*}
Altogether we obtain that if the interval $[X, X^{1+\epsilon}]$ does not contain a large value of $\Delta$, then $|U|\leq 2\frac{e^{\sigma_0\mu+\sigma_0^2k}}{\gamma_0^\epsilon}$.

On the other hand we can express $U$ using complex integration and obtain
\[
U = \sum_\rho e^{k(\rho-i\gamma_0)^2+\mu(\rho-i\gamma_0)} + \mathcal{O}(1),
\]
where the sum runs over non-trivial zeros of $\zeta$, see \cite[(5.1)]{Pintz}. We divide the zeros of $\zeta$ occurring in this sum in four sets:  Let $Z_1$ be the set of zeros $\rho_1=\sigma_1+\gamma_1$ with $|\gamma_0-\gamma_1|>\log \gamma_0$, $Z_2$ be the set of zeros satisfying $\frac{\epsilon}{16}\leq |\gamma-\gamma_0|\leq\log \gamma_0$, $Z_3$ be the set of zeros in the rectangle
\[
0\leq\Re\rho\leq\sigma_0-\frac{\epsilon}{16},\quad |\gamma-\gamma_0|\leq\frac{\epsilon}{16},
\]
and $Z_4$ be the set of zeros satisfying $|\gamma_1-\gamma_0|\leq\frac{\epsilon}{16}$ and $\sigma_1\geq\sigma_0-\frac{\epsilon}{16}$. In view of Lemma~\ref{Lem:Turan} we see that $|Z_4|\leq 0.71\frac{\epsilon}{8}\log \gamma_0\leq \frac{\epsilon}{11}\log\gamma_0$.

We will first show that only the contribution of $Z_4$ is relevant.

We have
\begin{eqnarray*}
\left|\sum_{\rho\in Z_1} e^{k(\rho-i\gamma_0)^2+\mu(\rho-i\gamma_0)}\right| & \leq & 
e^{\sigma_0\mu} \sum_{\rho\in Z_1} e^{-k|\rho-\rho_0|^2/2}\\
 & \leq & 2e^{\sigma_0\mu} e^{-\frac{k}{2}\log^2\gamma_0}\gamma_0\log\gamma_0 +  2e^\mu\sum_{\gamma>\gamma_0+\log\gamma_0}e^{-k(\gamma-\gamma_0)^2/2}\\
  & \leq & e^{\sigma_0\mu} + 2e^{\sigma_0\mu} \sum_{n\geq \log\gamma_0} e^{-kn^2/2} \log(n+\gamma_0)\\
  & \leq & 2e^{\sigma_0\mu}\\
   & \leq & 2e^{\sigma_0\mu+\sigma_0k^2} X^{-\frac{\epsilon^4}{3200}},
\end{eqnarray*}
since  $\sigma_0\geq\frac{1}{2}$.

If $\rho\in Z_2$, then $\Re\;\rho\leq\sigma_0$, thus
\[
\Re\;(\rho-i\gamma_0)^2 = \sigma^2-(\gamma-\gamma_0)^2 \leq \sigma_0^2-\frac{\epsilon^2}{256},
\]
and therefore
\begin{eqnarray*}
\left|\sum_{\rho\in Z_2} e^{k(\rho-i\gamma_0)^2+\mu(\rho-i\gamma_0)}\right| & \leq &
\big(N(\gamma_0+\log \gamma_0)-N(\gamma_0-\log \gamma_0)\big)e^{\sigma_0\mu+\sigma_0^2k - \frac{\epsilon^2}{256}k}\\
 & \leq & 2\log^2\gamma_0 e^{\sigma_0\mu+\sigma_0^2k} X^{-\frac{\epsilon^4}{11000}}
\end{eqnarray*}

If $\rho\in Z_3$, then $\Re\rho\leq\sigma_0-\frac{\epsilon}{16}$, and $|Z_3|\leq \log\gamma_0$, thus
\[
\sum_{\rho\in Z_3}e^{k(\rho-i\gamma_0)^2+\mu(\rho-i\gamma_0)} \leq \log\gamma_0 e^{\sigma_0\mu+\sigma_0^2 k -\frac{\epsilon\mu}{16}} \leq \log\gamma_0 e^{\sigma_0\mu+\sigma_0^2 k}X^{-\epsilon/16}
\]

Altogether we obtain
\begin{eqnarray*}
\sum_{\rho\not\in Z_4} e^{k(\rho-i\gamma_0)^2+\mu(\rho-i\gamma_0)} & \leq &  2\log^2\gamma_0 e^{\sigma_0\mu+\sigma_0^2 k}\left(X^{-\frac{\epsilon^4}{3200}} + X^{-\frac{\epsilon^4}{11000}} + X^{-\frac{\epsilon}{8}}\right)\\
 & \leq & \frac{e^{\sigma_0\mu+\sigma_0^2 k}}{\gamma_0^\epsilon},
\end{eqnarray*}
provided that $X>\gamma_0^{12000\epsilon^{-3}}$. We conclude that if $|\Delta(x)|$ is small throughout the interval$[X, X^{1+\epsilon}]$, then 
\[
\left|\sum_{\rho\in Z_4} e^{k(\rho-i\gamma_0)^2+\mu(\rho-i\gamma_0)}\right| \leq 3\frac{e^{\sigma_0\mu+\sigma_0^2k}}{\gamma_0^\epsilon}
\]
holds for all $\mu$ satisfying (\ref{eq:mu def}). We now apply Lemma~\ref{Lem:power sum} and find that for some $k$ in this range, the left hand side of this inequality is bounded below by $e^{\sigma_0\mu+\sigma_0^2k}$ multiplied by
\[
\left(\frac{1}{4e\left(\frac{3}{\epsilon}+3\right)}\right)^{|Z_4|} \geq 45^{-\frac{1}{11}\epsilon\log\gamma_0}\geq 1.42^{-\varepsilon\log\gamma_0} \geq \gamma_0^{-0.35\epsilon},
\]
thus $\gamma_0^\epsilon<3\gamma_0^{0.35\epsilon}$, which inpliess $\gamma_0<5.5^{\frac{1}{\epsilon}}$, contrary to our assumption.

\end{document}